\documentclass[12pt,reqno]{amsart}
\usepackage{amsmath,amsthm,amscd,amsfonts,amssymb,color}
\usepackage{cite}
\usepackage[mathscr]{eucal}
\usepackage{mathtools}

\usepackage[bookmarksnumbered,colorlinks,plainpages]{hyperref}
\setcounter{MaxMatrixCols}{10}
\voffset = -18pt \hoffset = -27pt \textwidth = 5.6in
\textheight 22.5truecm \textwidth 14.5truecm
\setlength{\oddsidemargin}{0.35in}\setlength{\evensidemargin}{0.35in}
\setlength{\topmargin}{-.5cm}
\newtheorem{theorem}{Theorem}[section]
\newtheorem{lemma}[theorem]{Lemma}

\theoremstyle{definition}
\newtheorem{definition}[theorem]{Definition}

\newtheorem{Open Prob}[theorem]{Open Problem}
\theoremstyle{remark}

\numberwithin{equation}{section}
\def\DJ{\leavevmode\setbox0=\hbox{D}\kern0pt\rlap{\kern.04em\raise.188\ht0\hbox{-}}D}
\begin{document}

\title[Proximal contraction]{A note on best proximity point for proximal contraction}

\author[S.\ Som]
{Sumit Som}

\address{           Sumit Som,
                    Department of Mathematics,
                    School of Basic and Applied Sciences, Adamas University, Barasat-700126, India.}
                    \email{somkakdwip@gmail.com}

\subjclass {$47H10$}

\keywords{Best proximity point, proximal contraction, Banach contraction principle}

\begin{abstract}
In the year 2011, S.Basha \cite{BS} introduced the notion of proximal contraction in a metric space $X$ and study the existence and uniqueness of best proximity point for this class of mappings. Also, the author gave an algorithm to achieve this best proximity point. In this paper, we show that the best proximity point theorem can be proved by Banach contraction principle.
\end{abstract}

\maketitle

\section{\bf{Introduction}}
Metric fixed point theory is an essential part of Mathematics as it gives sufficient conditions which will ensure the existence of solutions of the equation $F(x)=x$ where $F$ is a self mapping defined on a metric space $(M,d).$ Such a solution is called a fixed point of the mapping $F.$ Banach contraction principle for standard metric spaces is one of the important results as it gives a class of mappings to have a unique fixed point. It also has a lot of applications in the area of differential equations, integral equations. Let $A,B$ be non-empty subsets of a metric space $(M,d)$ and $Q:A\rightarrow B$ be a non-self mapping. A necessary condition, to guarantee the existence of solutions of the equation $Qx=x,$ is $Q(A)\cap A\neq \phi.$ If $Q(A)\cap A= \phi$ then the mapping $Q$ has no fixed points. In this case, one seek for an element in the domain space whose distance from its image is minimum i.e, one interesting problem is to $\mbox{minimize}~d(x,Qx)$ such that $x\in A.$ Since $d(x,Qx)\geq d(A,B)=\inf~\{d(x,y):x\in A, y\in B\},$ so, one search for an element $x\in A$ such that $d(x,Qx)= d(A,B).$ Best proximity point problems deal with this situation. Authors usually discover best proximity point theorems to generalize fixed point theorems in metric space. In the year 2011, Basha \cite{BS} investigated about the existence of best proximity points about proximal contractions. Also, in that article, author gave an algorithm to achieve the unique best proximity point. In this article, we show that the conclusion of the best proximity point theorem for proximal contraction can be achieved from the fixed point theory by applying the Banach contraction principle. Also, this implies the picard sequence will also work to find the best proximity point.

\section{\bf{Main results}}

We first recall the following definition of proximal contraction from \cite{BS}.

\begin{definition}\cite{BS}
Let $(A, B)$ be a pair of nonempty subsets of a metric space $(M,d).$ A mapping $f:A\rightarrow B$ is said to be a proximal contraction if there exists a non-negetive number $\alpha<1$ such that   \[
\begin{rcases}
 d(u_1, f(x_1))= d(A, B)\\
 d(u_2, f(x_2))= d(A, B)
 \end{rcases}
  {\Longrightarrow d(u_1,u_2)\leq \alpha d(x_1,x_2)}
  \] for all $u_1, u_2, x_1, x_2 \in A,$ where $d(A,B) =  \inf\Big\{d(x, y): x\in A,\ y\in B\Big\}.$
\end{definition}
The following notations will be needed.
Let $(M,d)$ be a metric space and $A,B$ be nonempty subsets of $M.$ Then
$$A_0=\{x\in A: d(x,y)=d(A,B)~\mbox{for some}~y\in B\}.$$
$$B_0=\{y\in B: d(x,y)=d(A,B)~\mbox{for some}~x\in A\}.$$

\begin{definition}\cite{BS}
Let $(M,d)$ be a metric space and $A,B$ be two non-empty subsets of $M.$ Then $B$ is said to be approximatively compact with respect to $A$ if for every sequence $\{y_n\}$ of $B$ satisfying $d(x,y_n)\rightarrow d(x,B)$ as $n\rightarrow \infty$ for some $x\in A$ has a convergent subsequence.
\end{definition}

We need the following result from \cite{FE}.
\begin{lemma}\cite{FE}\label{b}
Let $(A,B)$ be a nonempty and closed pair of subsets of a metric space $(X,d)$ such that $B$ is approximatively compact with respect to $A.$ Then $A_0$ is closed.
\end{lemma}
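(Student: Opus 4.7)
The plan is to prove closedness of $A_0$ by a standard sequential argument, exploiting the approximative compactness hypothesis to extract a limit in $B$ witnessing membership in $A_0$.

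First I would take an arbitrary sequence $\{x_n\}$ in $A_0$ converging to some point $x \in X$, and show that $x \in A_0$. Since $A$ is closed and $\{x_n\} \subset A_0 \subseteq A$, the limit already satisfies $x \in A$. By definition of $A_0$, I can pick for each $n$ a companion point $y_n \in B$ with $d(x_n, y_n) = d(A, B)$.

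The key computation is the triangle inequality estimate
\[
d(A, B) \;\leq\; d(x, B) \;\leq\; d(x, y_n) \;\leq\; d(x, x_n) + d(x_n, y_n) \;=\; d(x, x_n) + d(A, B),
\]
where the first inequality uses $x \in A$. Letting $n \to \infty$ forces both $d(x, B) = d(A, B)$ and $d(x, y_n) \to d(x, B)$. At this point I can invoke the hypothesis that $B$ is approximatively compact with respect to $A$: the sequence $\{y_n\}$ in $B$ approximates the distance from the fixed point $x \in A$ to $B$, so a subsequence $\{y_{n_k}\}$ converges to some $y$. Since $B$ is closed, $y \in B$, and continuity of the distance gives $d(x, y) = \lim_k d(x, y_{n_k}) = d(A, B)$, so $x \in A_0$.

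There is no real obstacle here; the only place one has to be careful is verifying that $d(x, y_n) \to d(x, B)$ (rather than just being bounded above by $d(A,B) + o(1)$), which is exactly what the sandwich above provides, and that is what licenses the use of approximative compactness. The closedness of $A$ and $B$ is used only at the very end to keep the limit points inside the respective sets.
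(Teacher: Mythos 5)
Your argument is correct and complete: the sandwich $d(A,B)\leq d(x,B)\leq d(x,y_n)\leq d(x,x_n)+d(A,B)$ is exactly what is needed to legitimately invoke approximative compactness, and the rest follows from closedness of $B$ and continuity of the metric. The paper itself states this lemma without proof, citing \cite{FE}; your sequential argument is the standard one given there, so there is nothing to add.
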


In \cite{BS}, S.Basha proved the following best proximity point result.
\begin{theorem}\cite{BS}\label{a}
Let $A,B$ be non-void, closed subsets of a complete metric space $(M,d)$ such that $B$ is approximatively compact with respect to $A.$ Moreover, assume that $A_0,B_0$ are non-void. Let the non-self mapping $T:A\rightarrow B$ be a proximal contarction such that $T(A_0)\subseteq B_0.$ Then there exists an unique best proximity point for $T$ and for any $x_0\in A_0$ the sequence defined by $d(x_{n+1},T(x_n))=d(A,B)$ converges to the best proximity point.
\end{theorem}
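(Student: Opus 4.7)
The plan is to reduce the best proximity point statement to a direct application of the Banach contraction principle on the set $A_0$. The first step is to construct a natural self-map $g:A_0\to A_0$ that encodes the proximity condition $d(u,Tx)=d(A,B)$. Given $x\in A_0$, the hypothesis $T(A_0)\subseteq B_0$ guarantees $Tx\in B_0$, so by the very definition of $B_0$ there exists at least one $u\in A$ with $d(u,Tx)=d(A,B)$; any such $u$ then lies in $A_0$. Uniqueness of this $u$ is immediate from the proximal contraction axiom: if two candidates $u_1,u_2$ both satisfied $d(u_i,Tx)=d(A,B)$, applying the definition with $x_1=x_2=x$ gives $d(u_1,u_2)\le\alpha\,d(x,x)=0$. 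Hence $g(x):=u$ is a well-defined map $A_0\to A_0$.

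The second step is to verify that $g$ is an $\alpha$-contraction. For $x_1,x_2\in A_0$, the points $g(x_1),g(x_2)$ satisfy $d(g(x_1),Tx_1)=d(A,B)$ and $d(g(x_2),Tx_2)=d(A,B)$ by construction, so the proximal contraction condition yields
\[
d(g(x_1),g(x_2))\le \alpha\, d(x_1,x_2).
\]
For the Banach contraction principle to be applicable we need $A_0$ to be complete as a metric subspace of $M$; this is exactly where Lemma \ref{b} enters, since it ensures $A_0$ is closed in the complete space $M$, hence complete.

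At this point the Banach contraction principle applied to $g:A_0\to A_0$ produces a unique fixed point $x^\ast\in A_0$, and moreover the Picard iteration $x_{n+1}=g(x_n)$ from any $x_0\in A_0$ converges to $x^\ast$. Translating back: $g(x^\ast)=x^\ast$ is precisely $d(x^\ast,Tx^\ast)=d(A,B)$, so $x^\ast$ is a best proximity point, and the Picard sequence is exactly the sequence in the theorem defined by $d(x_{n+1},Tx_n)=d(A,B)$. Uniqueness of the best proximity point for $T$ is also immediate, since any such point is a fixed point of $g$ and $g$ has only one.

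The only delicate point, which is the main (though modest) obstacle, is checking that the construction of $g$ is legitimate, i.e.\ that the conditions $T(A_0)\subseteq B_0$ and the proximal contraction inequality together give both existence and uniqueness of $g(x)$ in $A_0$; once this is in hand the rest is a routine invocation of Banach's theorem, which is the whole point of the note.
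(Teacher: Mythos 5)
Your proposal is correct and follows essentially the same route as the paper: define the induced self-map on $A_0$ via the proximity condition, check it is well defined and an $\alpha$-contraction using the proximal contraction axiom, use Lemma \ref{b} to get completeness of $A_0$, and invoke the Banach contraction principle. The only (welcome) refinement over the paper's write-up is your explicit remark that the point $u$ with $d(u,Tx)=d(A,B)$, a priori only in $A$, automatically lies in $A_0$.
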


Now we like to state our main result.
\begin{theorem}
Theorem \ref{a} follows from the Banach contraction principle.
\end{theorem}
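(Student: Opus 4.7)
The plan is to manufacture a self-map $g\colon A_0\to A_0$ out of $T$ and show that it is a Banach contraction on a complete metric space, so that the best proximity point of $T$ arises as the unique fixed point of $g$.

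First, I would define $g$ as follows. Fix $x\in A_0$. Since $T(A_0)\subseteq B_0$, the point $T(x)$ lies in $B_0$, so there exists some $u\in A$ with $d(u,T(x))=d(A,B)$; such a $u$ automatically belongs to $A_0$. Uniqueness of this $u$ is immediate from the proximal contraction inequality applied with $x_1=x_2=x$: if two candidates $u_1,u_2$ both realize $d(A,B)$ with $T(x)$, then $d(u_1,u_2)\le \alpha\,d(x,x)=0$. Hence it is legitimate to set $g(x)$ equal to that unique $u$, giving a well-defined map $g\colon A_0\to A_0$.

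Next, I would observe that $g$ is an $\alpha$-contraction. For $x_1,x_2\in A_0$, the points $u_1=g(x_1)$ and $u_2=g(x_2)$ satisfy $d(u_i,T(x_i))=d(A,B)$ by construction, so the proximal contraction condition gives $d(g(x_1),g(x_2))=d(u_1,u_2)\le \alpha\,d(x_1,x_2)$. To invoke Banach's contraction principle I need $(A_0,d)$ to be a complete metric space; this is where Lemma \ref{b} enters. Since $B$ is approximatively compact with respect to $A$ and both $A,B$ are closed, Lemma \ref{b} yields that $A_0$ is closed in $M$, and therefore complete as a closed subset of the complete space $M$.

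Applying Banach's contraction principle to $g$ on $A_0$ produces a unique fixed point $x^\ast\in A_0$, and the Picard iterates $x_{n+1}=g(x_n)$ converge to $x^\ast$ for every starting point $x_0\in A_0$. By the very definition of $g$, the equation $x_{n+1}=g(x_n)$ is the same as $d(x_{n+1},T(x_n))=d(A,B)$, and $x^\ast=g(x^\ast)$ is the same as $d(x^\ast,T(x^\ast))=d(A,B)$, i.e.\ $x^\ast$ is the unique best proximity point of $T$. The only real subtlety—the step I would flag as the main obstacle—is verifying that $g$ actually lands in $A_0$ and is single-valued; once the definition of $A_0$, the hypothesis $T(A_0)\subseteq B_0$, and the proximal contraction inequality with $x_1=x_2$ are combined, this dissolves, and the rest is a direct appeal to Banach's theorem.
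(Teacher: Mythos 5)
Your proposal is correct and follows essentially the same route as the paper: define the induced self-map on $A_0$ (the paper calls it $S$), verify single-valuedness and the $\alpha$-contraction property via the proximal contraction inequality, use Lemma \ref{b} for completeness of $A_0$, and apply Banach's principle. The only cosmetic difference is that you explicitly note why the realizing point $u$ lands in $A_0$, a step the paper leaves implicit.
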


\begin{proof}
Let $x\in A_0.$ As $T(A_0)\subseteq B_0,$ so, $T(x)\in B_0.$ That means there exists $y\in A_0$ such that $d(y,T(x))=d(A,B).$ Now we will show that $y\in A_0$ is unique. Suppose there exists $y_1,y_2\in A_0$ such that $d(y_1,T(x))=d(A,B)$ and $d(y_2,T(x))=d(A,B).$ Now as $T:A\rightarrow B$ is a proximal contraction so we have,
$$d(y_1,y_2)\leq \alpha d(x,x)$$
$$\Rightarrow y_1=y_2.$$
Let $S:A_0\rightarrow A_0$ be defined by $S(x)=y.$ Now we will show that $S$ is a contraction mapping. Let $x_1,x_2\in A_0.$ As $d(S(x_1),T(x_1))=d(A,B)$ and $d(S(x_2),T(x_2))=d(A,B)$ so we have,
$$d(S(x_1),S(x_2))\leq \alpha d(x_1,x_2).$$ This shows that $S:A_0\rightarrow A_0$ is a Banach contraction mapping. Now, from lemma \ref{b}, we can say $A_0$ is closed. So, $A_0$ is a complete metric space. Then by Banach contraction principle the mapping $S$ has an unique fixed point $z\in A_0.$ Now $d(z,T(z))=d(S(z),T(z))=d(A,B).$ This shows that $z$ is a best proximity point for $T.$ Uniqueness follows from the definition of proximal contraction. Also, we can conclude that for any $x_0\in A_0$ the sequence $\{S^{n}(x_0)\}$ will converge to the unique best proximity point of $T.$
\end{proof}

\end{document}